\newcommand{\rowname}{Catalan DPP}
\newcommand{\pathname}{\rowname{} path}
\newcommand\dyckpath[3]{
  \fill[lightgray!25]  (#1) rectangle +(#2,#2);
  \fill[fill=lightgray]
  (#1)
  \foreach \dir in {#3}{
    \ifnum\dir=0
    -- ++(1,0)
    \else
    -- ++(0,1)
    \fi
  } |- (#1);
  \draw[help lines] (#1) grid +(#2,#2);
  \draw[dashed] (#1) -- +(#2,#2);
  \coordinate (prev) at (#1);
  \foreach \dir in {#3}{
    \ifnum\dir=0
    \coordinate (dep) at (1,0);
    \else
    \coordinate (dep) at (0,1);
    \fi
    \draw[line width=2pt,-stealth] (prev) -- ++(dep) coordinate (prev);
  };
}
\newcommand\pathbox[3]{
  \fill[lightgray!25]  (#1) rectangle +(#2,#2-1);
  \fill[fill=lightgray]
  (#1)
  \foreach \dir in {#3}{
    \ifnum\dir=1
    -- ++(1,0)
    \else
    -- ++(0,1)
    \fi
  } |- (#1);
  \draw[help lines] (#1) grid +(#2,#2-1);
  \draw[dashed] (#1)+(0,1) -- +(#2-2,#2-1);
  \coordinate (prev) at (#1);
  \foreach \dir in {#3}{
    \ifnum\dir=1
    \coordinate (dep) at (1,0);
    \else
    \coordinate (dep) at (0,1);
    \fi
    \draw[line width=2pt,-stealth] (prev) -- ++(dep) coordinate (prev);
  };
}
\newcommand{\mypathbox}[3]{
\rotatebox[origin=c]{-90}{
\scalebox{0.5}{
\begin{tikzpicture}
\pathbox{#1}{#2}{#3}
\end{tikzpicture}
}
}
}
\newcounter{x}
\newcounter{y}
\newcounter{z}
\newcommand\xaxis{210}
\newcommand\yaxis{-30}
\newcommand\zaxis{90}
\newcommand\topside[3]{
  \fill[fill=white, draw=black,shift={(\xaxis:#1)},shift={(\yaxis:#2)},
  shift={(\zaxis:#3)}] (0,0) -- (30:1) -- (0,1) --(150:1)--(0,0);
}
\newcommand\leftside[3]{
  \fill[fill=lightgray, draw=black,shift={(\xaxis:#1)},shift={(\yaxis:#2)},
  shift={(\zaxis:#3)}] (0,0) -- (0,-1) -- (210:1) --(150:1)--(0,0);
}
\newcommand\rightside[3]{
  \fill[fill=darkgray, draw=black,shift={(\xaxis:#1)},shift={(\yaxis:#2)},
  shift={(\zaxis:#3)}] (0,0) -- (30:1) -- (-30:1) --(0,-1)--(0,0);
}
\newcommand\cube[3]{
  \topside{#1}{#2}{#3} \leftside{#1}{#2}{#3} \rightside{#1}{#2}{#3}
}
\newcommand\planepartition[1]{
 \setcounter{x}{-1}
  \foreach \a in {#1} {
    \addtocounter{x}{1}
    \setcounter{y}{-1}
    \foreach \b in \a {
      \addtocounter{y}{1}
      \setcounter{z}{-1}
      \ifthenelse{\equal{\b}{0}}
      {
      
      }
      {
      \foreach \c in {1,...,\b} {
        \addtocounter{z}{1}
        \cube{\value{x}}{\value{y}}{\value{z}}
        }
      }
    }
  }
}
\newcommand{\myPP}[1]{
	\scalebox{0.3}{
		\begin{tikzpicture}
			\planepartition{#1}
		\end{tikzpicture}
	}
}
\newcommand{\cattree}{
\begin{tikzpicture}
\tikzstyle{level 2}=[sibling distance=59mm] \tikzstyle{level 3}=[sibling distance=21mm] \tikzstyle{level 4}=[sibling distance=5mm] \node  (z){1}
child {node {2}
     child {node  (a) {2}
         child {node  (b) {2}
             child {node {2}}
             child {node {3}}
         }
         child {node  (d) {3}
             child {node {2}}
             child {node {3}}
             child {node {4}}
         }
     }
     child {node  (e) {3}
         child {node  (f) {2}
             child {node {2}}
             child {node {3}}
         }
         child {node  (g) {3}
             child {node {2}}
             child {node {3}}
             child {node {4}}
         }
         child {node  (h) {4}
             child {node {2}}
             child {node {3}}
             child {node {4}}
             child {node {5}}
         }
     }
     };
     \end{tikzpicture}
}
\newcommand{\permtree}{
\begin{tikzpicture}
\tikzstyle{level 2}=[sibling distance=63mm]
\tikzstyle{level 3}=[sibling distance=27mm]
\tikzstyle{level 4}=[sibling distance=8mm]
\node  (z){$\emptyset$}
child{node{1}
    child {node  (a) {21}
        child {node  (b) {321}
            child {node {\scriptsize 4321}}
			child {node {\scriptsize 3214}}
        }
        child {node  (d) {213}
            child {node {\scriptsize 4213}}
			child {node {\scriptsize 2143}}
			child {node {\scriptsize 2134}}
        }
    }
    child {node  (e) {12}
        child {node  (f) {312}
            child {node {\scriptsize 4312}}
			child {node {\scriptsize 3124}}
        }
        child {node  (g) {132}
            child {node {\scriptsize 4132}}
			child {node {\scriptsize 1432}}
			child {node {\scriptsize 1324}}
        }
        child {node  (h) {123}
            child {node {\scriptsize 4123}}
			child {node {\scriptsize 1423}}
			child {node {\scriptsize 1243}}
			child {node {\scriptsize 1234}}            
        }
    }};    
	\end{tikzpicture}
}
\newcommand{\pathtree}{
\begin{tabular}{r@{ }l}
\begin{tikzpicture}
\tikzstyle{level 2}=[sibling distance=59mm] \tikzstyle{level 3}=[sibling distance=21mm] \tikzstyle{level 4}=[sibling distance=5mm] \node  (z){$\emptyset$}[grow = right]
child {node {$\emptyset$}
     child {node  (a) {$\emptyset$}
         child {node  (b) {$\emptyset$}
             child {node {$\emptyset$}}
             child {node {1}}
         }
         child {node  (d) {1}
             child {node {1-11}}
             child {node {11-1}}
             child {node {11}}
         }
     }
     child {node  (e) {1}
         child {node  (f) {1-11}
             child {node {1-11-11}}
             child {node {11-1-11}}
         }
         child {node  (g) {11-1}
             child {node {1-111-1}}
             child {node {11-11-1}}
             child {node {111-1-1}}
         }
         child {node  (h) {11}
             child {node {1-111}}
             child {node {11-11}}
             child {node {111-1}}
             child {node {111}}
         }
     }
     };
     \end{tikzpicture}
 %
%
\end{tabular}
}
\newcommand{\rowtree}{
\begin{tikzpicture}
\tikzstyle{level 2}=[sibling distance=59mm] \tikzstyle{level 3}=[sibling distance=21mm] \tikzstyle{level 4}=[sibling distance=5mm] \node  (z){$\emptyset$}[grow = right]
child {node {$\emptyset$}
     child {node  (a) {$\emptyset$}
         child {node  (b) {$\emptyset$}
             child {node {$\emptyset$}}
             child {node {2}}
         }
         child {node  (d) {2}
             child {node {3 2}}
             child {node {3 1}}
             child {node {3}}
         }
     }
     child {node  (e) {2}
         child {node  (f) {3 2}
             child {node {4 3 2}}
             child {node {4 2 2}}
         }
         child {node  (g) {3 1}
             child {node {4 3 1}}
             child {node {4 2 1}}
             child {node {4 1 1}}
         }
         child {node  (h) {3}
             child {node {4 3}}
             child {node {4 2}}
             child {node {4 1}}
             child {node {4}}
         }
     }
     };
     \end{tikzpicture}
}
\title{A Catalan Subset of Descending Plane Partitions}
\author{Colton Keller and Jessica Striker}
\thanks{}
\email{c.keller@ndsu.edu, jessica.striker@ndsu.edu}
\address{Department of Mathematics, North Dakota State University, PO Box 6050, Fargo, ND 58108-6050}
\newtheorem{thm}{Theorem}
\newtheorem{lemma}[thm]{Lemma}
\theoremstyle{definition}
\newtheorem{defn}[thm]{Definition}
\begin{document}

\begin{abstract}
Descending plane partitions, alternating sign matrices, and totally symmetric self-complementary plane partitions are equinumerous combinatorial sets for which no explicit bijection is known. In this paper, we isolate a subset of descending plane partitions counted by the Catalan numbers. The proof follows by constructing a generating tree on these descending plane partitions that has the same structure as the generating tree for 231-avoiding permutations. We hope this result will provide insight on the search for a bijection with alternating sign matrices and/or totally symmetric self-complementary plane partitions, since these also contain Catalan subsets.
\end{abstract}

\maketitle

\section{Introduction}
Descending plane partitions  with largest part at most $n$, alternating sign matrices with $n$ rows and $n$ columns, and totally symmetric self-complementary plane partitions inside a $2n\times 2n\times 2n$ box are each enumerated by the product formula
\begin{equation}
\label{eq:prod}
\prod_{j=0}^{n-1}\frac{(3j+1)!}{(n+j)!}.\tag{$*$}
\end{equation}
This cries out for a \emph{bijective} explanation, but it has been an outstanding problem for decades to find these missing bijections. See \cite{ANDREWS_DPP,ANDREWS_PPV,book,kuperbergASMpf,MRRANDREWSCONJ,MRRASMDPP,MRR3,RobbinsRumseyDet,ZEILASM}  for these enumerations and bijective conjectures.
Many papers have been written in an effort to reduce the problem; papers relevant to the descending plane partition portion of the problem include R.~Behrend, P.~Di Francesco, and P.~Zinn-Justin's proofs of multiply-refined enumerations of alternating sign matrices and descending plane partitions \cite{WeightedASMDPP,Behrend} and the second author's bijective map between descending plane partitions with no special parts and permutation matrices \cite{dppandpmbijection}.
Descending plane partitions may arguably be the least natural of these three sets of objects, but they are the only one  for which a statistic is known whose generating function is a $q$-analogue of (\ref{eq:prod}). This statistic is particularly nice; it is the sum of the entries (conjectured in~\cite{ANDREWS_DPP} and proved in~\cite{MRRANDREWSCONJ}). 
Descending plane partitions can be seen as non-intersecting lattice paths~\cite{lalonde2003} and also as rhombus tilings of a  hexagon with a triangular hole in the middle~\cite{KrattDPP}.

\textbf{Our main result} (Theorem~\ref{lem:catrelat}) isolates a subset of descending plane partitions of order $n$ and proves it to be enumerated by the $n$th Catalan number $C_n = \frac{1}{n+1}\binom{2n}{n}$. This result is a step toward better understanding how descending plane partitions relate to the other objects enumerated by (\ref{eq:prod}), since there are several known Catalan subsets within those sets. These include
\emph{link patterns} in \emph{fully-packed loops}, diagonals of \emph{monotone and magog triangles} (equivalently, order ideals within layers of \emph{tetrahedral posets}~\cite{STRIKERPOSET}), and endpoints of certain nests of non-intersecting  lattice paths~\cite{DiFrancesco_TSSCPP_qKZ}.
 
We hope that comparing Catalan descending plane partitions with the known Catalan subsets of alternating sign matrices and totally symmetric self-complementary plane partitions will be helpful in finding these missing bijections. We also remark that we prove Theorem~\ref{lem:catrelat} neither by finding a bijection to typical Catalan objects, such as Dyck paths or triangulations, nor by showing these objects satisfy the Catalan recurrence in the usual way. Rather, we use the generating tree approach of J.~West~\cite{CatTrees}, which we have found to be both lovely and useful and should be more widely known.

The paper is organized as follows. In Section~\ref{sec:def}, we first define descending plane partitions. We then define the other objects enumerated by (\ref{eq:prod}) and discuss their known Catalan subsets. In Section~\ref{sec:catdpp}, we identify our Catalan subset of descending plane partitions  and state our main result, Theorem~\ref{lem:catrelat}, on the enumeration of this subset. In Section~\ref{sec:gentree}, we give background on the Catalan generating tree and 231-avoiding permutations. In Section~\ref{sec:lemmaproof}, we prove Theorem~\ref{lem:catrelat}. Finally, in Section~\ref{sec:disc}, we discuss some implications of our main theorem.

\section{Definitions and background}
In this section, we first state definitions related to descending plane partitions. We then define alternating sign matrices and totally symmetric self-complementary plane partitions and briefly discuss their Catalan subsets.

\label{sec:def}
\begin{defn}
A \textbf{plane partition} is a finite subset, $P$, of positive integer lattice points, $\{(i,j,k)\subset \mathbb{N}^3\}$, such that if $(r,s,t)\in P$ and $(i,j,k)$ satisfies $1 \leq i \leq r$, $1 \leq j \leq s$, and $1 \leq k \leq t$, then $(i,j,k)\in P$ as well.
\end{defn}
A plane partition may be visualized as a stack of unit cubes pushed into a corner as shown in Figure~\ref{fig:planepartition} below. A plane partition may also be represented as a left and top justified array of integers where each row represents one layer of the plane partition and each integer represents the number of unit cubes stacked in that column of the layer. For example, Figure \ref{fig:planepartition}, left, shows a plane partition with its corresponding integer array. Note the rows and columns of an integer array corresponding to a plane partition must be weakly decreasing.

\begin{defn}
A \textbf{shifted plane partition} is an array of integers weakly decreasing in rows and columns such that the beginning of each row is shifted to the right by one position relative to the previous row. That is, it is an integer array as in Figure \ref{fig:jesslabeling} where each row and column is weakly decreasing. A \textbf{strict shifted plane partition} is a shifted plane partition with the additional restriction that each column must be strictly decreasing.
\end{defn}

Figure \ref{fig:planepartition}, center, shows a strict shifted plane partition.

\begin{figure}[htbp]
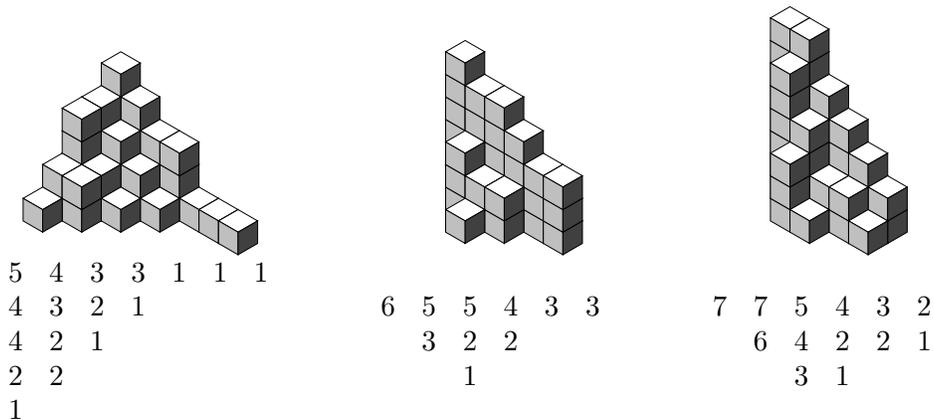

\myPP{{5,4,3,3,1,1,1},{4,3,2,1},{4,2,1},{2,2},{1}} \hspace{.75in} \myPP{{6,5,5,4,3,3},{0,3,2,2},{0,0,1}} \hspace{.75in} \myPP{{7,7,5,4,3,2},{0,6,4,2,2,1},{0,0,3,1}}

$\begin{array}{ccccccc}
5&4&3&3&1&1&1\\
4&3&2&1\\
4&2&1\\
2&2\\
1
\end{array}$ \hspace{.35in}
$\begin{array}{cccccc}6&5&5&4&3&3\\ &3&2&2\\ & &1\end{array}$ \hspace{.35in}
$\begin{array}{cccccc}7&7&5&4&3&2\\ &6&4&2&2&1\\ & &3&1\end{array}$
\caption{\label{fig:planepartition}Left: A plane partition, visualized as unit cubes in a corner, with its corresponding integer array representation; Center: A strict shifted plane partition; Right: A descending plane partition.}
\end{figure}

Our main objects of study are the following.
\begin{defn}
\label{def:dpp}
A \textbf{descending plane partition} (DPP) is a strict shifted plane partition with the additional restrictions that:
\begin{enumerate}
\item the number of parts in each row is less than the greatest part in that row, and
\item the number of parts in each row is greater than or equal to the largest part in the next row.
\end{enumerate}
Using the notation in Figure \ref{fig:jesslabeling}, the above conditions are equivalent to:
\begin{enumerate}
\item $\lambda_{k} - k + 1<a_{k,k}$, and
\item $\lambda_{k} - k + 1\geq{a_{k+1,k+1}}$.
\end{enumerate}
A descending plane partition is \textbf{of order n} if its largest part is less than or equal to $n$.
\end{defn}

\begin{figure}[htbp]
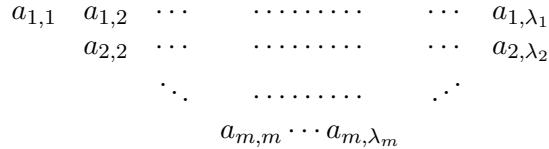

\[\begin{array}{cccccccc}
a_{1,1}&a_{1,2}&\cdots&\cdots\cdots\cdots&\cdots&a_{1,\lambda_{1}}\\
 &a_{2,2}&\cdots&\cdots\cdots\cdots&\cdots&a_{2,\lambda_{2}}\\
 & &\ddots & \cdots\cdots\cdots & \text{\reflectbox{$\ddots$}}\\
 & & &a_{m,m} \cdots a_{m,\lambda_{m}}
\end{array}\]
\caption{\label{fig:jesslabeling}Labeling of positions in shifted, strict shifted, and descending plane partitions.}
\end{figure}

An example of a descending plane partition is given in Figure \ref{fig:planepartition}, right.
See Figure~\ref{fig:dppsorder3} for the seven descending plane partitions of order 3.
\begin{figure}[htbp]
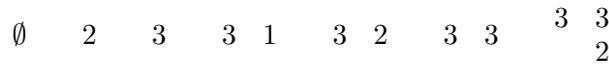

$\begin{array}{c}\emptyset\end{array}$  \ \
$\begin{array}{c}2\end{array}$  \ \
$\begin{array}{c}3\end{array}$  \ \
$\begin{array}{cc}3&1\end{array}$ \ \  
$\begin{array}{cc}3&2\end{array}$  \ \
$\begin{array}{cc}3&3\end{array}$  \ \
$\begin{array}{cc}3&3\\ &2\end{array}$  
\caption{\label{fig:dppsorder3}The seven descending plane partitions of order 3.}
\end{figure}

In the next section, we define our Catalan subset of descending plane partitions, which we prove in Theorem~\ref{lem:catrelat} to be counted by the Catalan numbers. We finish this section with definitions of the other objects enumerated by (\ref{eq:prod}) and a short discussion of their Catalan subsets.

\begin{defn}
An \textbf{alternating sign matrix} is a square matrix with entries in $\{0,1,-1\}$, row and column sums equal to 1, and the additional restriction that the nonzero entries alternate in sign across each row and column. 
\end{defn}

\begin{figure}[htbp]
$\left(\begin{array}{rrrrrr}0&0&0&1&0&0\\0&1&0&\mbox{--}1&0&1\\1&\mbox{--}1&0&0&1&0\\0&1&0&0&0&0\\0&0&1&0&0&0\\0&0&0&1&0&0\end{array}\right) \  
\begin{array}{ccccccccccc}
& & & & & 4 & & & & & \\
& & & & 2 & & 6 & & & & \\
& & & 1 & & 5 & & 6 & & & \\
& & 1 & & 2 & & 5 & & 6 & & \\
& 1 & & 2 & & 3 & & 5 & & 6 & \\
1 & & 2 & & 3 & & 4 & & 5 & & 6 
\end{array} \ 
\vcenter{\hbox{\includegraphics[scale=.25]{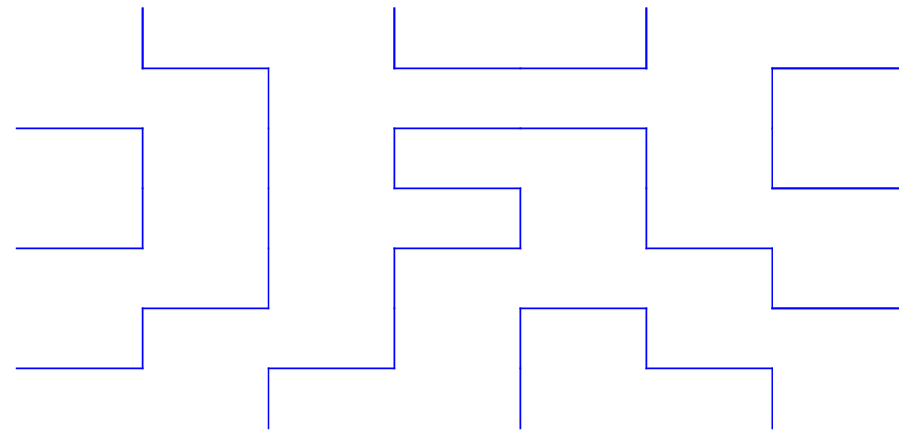}}}$
\caption{\label{fig:asmexample}Left: An alternating sign matrix; Center: Its corresponding monotone triangle; Right: Its corresponding fully-packed loop configuration.}
\end{figure}

An example alternating sign matrix can be seen in Figure \ref{fig:asmexample}, left. 
Note any permutation matrix is also an alternating sign matrix. The seven $3\times3$ alternating sign matrices are given in Figure \ref{fig:sevenasms}.

\begin{figure}[htbp]
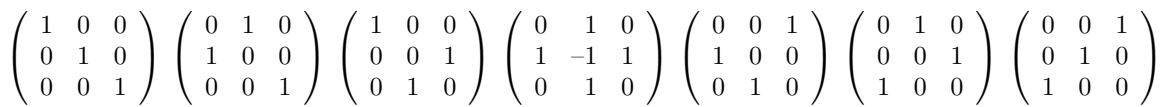

\scalebox{.9}{
$\left(\begin{array}{rrr}1&0&0\\0&1&0\\0&0&1\end{array}\right)$
$\left(\begin{array}{rrr}0&1&0\\1&0&0\\0&0&1\end{array}\right)$
$\left(\begin{array}{rrr}1&0&0\\0&0&1\\0&1&0\end{array}\right)$
$\left(\begin{array}{rrr}0&1&0\\1&\mbox{--}1&1\\0&1&0\end{array}\right)$
$\left(\begin{array}{rrr}0&0&1\\1&0&0\\0&1&0\end{array}\right)$
$\left(\begin{array}{rrr}0&1&0\\0&0&1\\1&0&0\end{array}\right)$
$\left(\begin{array}{rrr}0&0&1\\0&1&0\\1&0&0\end{array}\right)$}
\caption{\label{fig:sevenasms}The seven $3\times3$ alternating sign matrices.}
\end{figure}

Alternating sign matrices are in bijection with monotone triangles and fully-packed loop configurations. See Figure~\ref{fig:asmexample} for an example; see also~\cite{ProppManyFaces}. Each length $n$ northwest to southeast diagonal of a monotone triangle is a weakly increasing sequence of integers $x_1,x_2,\ldots,x_n$ such that $i\leq x_i\leq n$; these are counted by the $n$th Catalan number $C_n$.
Also, each fully-packed loop configuration of order $n$ has an associated link pattern, which is the noncrossing matching on its $2n$ external vertices; these are also counted by $C_n$.

\begin{defn}
A plane partition is \textbf{totally symmetric} if whenever $(i,j,k)\in\pi$, then all six permutations of $(i,j,k)$ are also in $\pi$. A plane partition is \textbf{self-complementary} inside its $a\times b\times c$ bounding box if it is equal to its complement in the box, that is, the
collection of empty cubes in the box is of the same shape as the
collection of cubes in the plane partition itself.
\end{defn}

\begin{figure}[htbp]
$\vcenter{\hbox{\includegraphics[scale=.42]{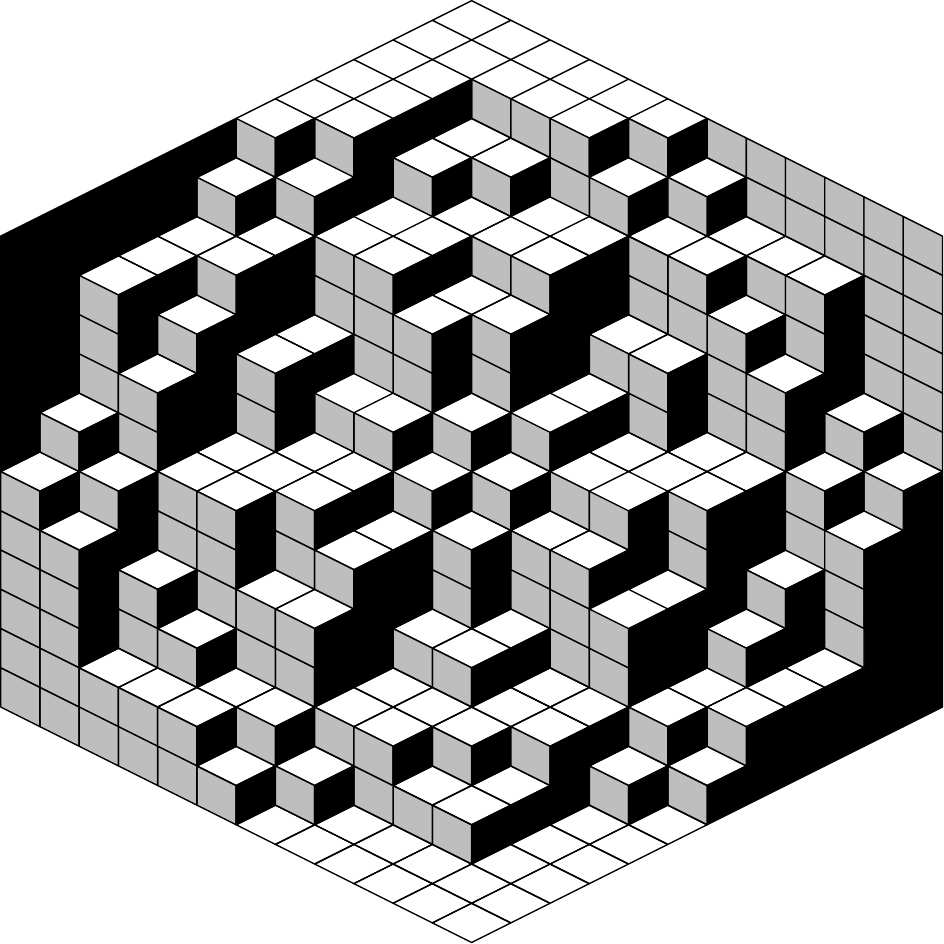}}} \  
\begin{array}{ccccccccccc}
& & & & & 6 & & & & & \\
& & & & 4 & & 5 & & & & \\
& & & 3 & & 4 & & 5 & & & \\
& & 2 & & 3 & & 5 & & 6 & & \\
& 1 & & 2 & & 3 & & 4 & & 6 & \\
1 & & 2 & & 3 & & 4 & & 5 & & 6 
\end{array} \ \
\vcenter{\hbox{\includegraphics[scale=.6]{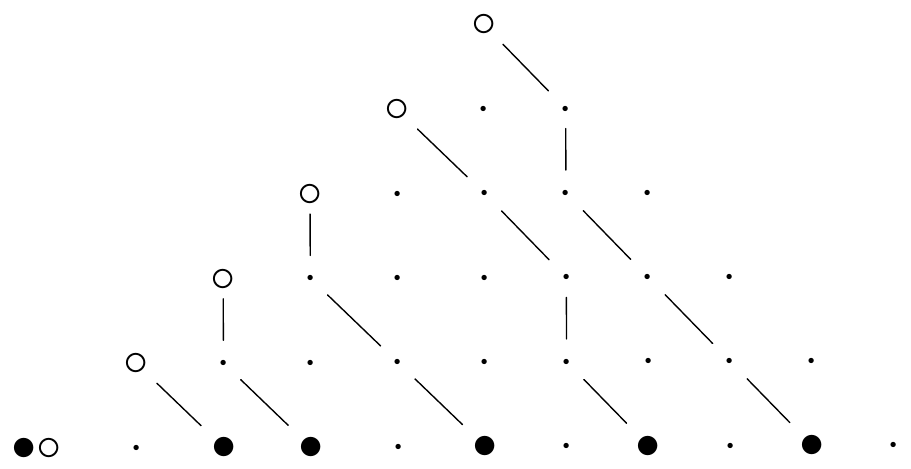}}}$
\caption{\label{fig:tsscppex}Left: A totally symmetric self-complementary plane partition inside a $12\times 12\times 12$ box; Center: Its corresponding magog triangle; Right: Its corresponding nest of non-intersecting lattice paths.}
\end{figure}

An example totally symmetric self-complementary plane partition can be seen in Figure \ref{fig:tsscppex}, left. 
The seven totally symmetric self-complementary plane partitions inside a $6\times 6\times 6$ box are given in Figure~\ref{ex:tikztsscpp}.

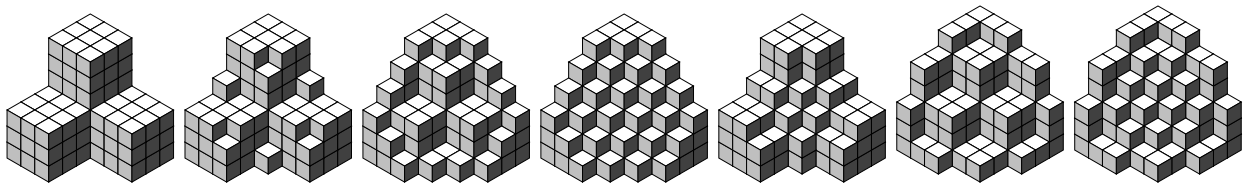
\begin{figure}[htbp]
\begin{center}
$\begin{gathered}\scalebox{0.213}{
$\begin{tikzpicture}
\planepartition{{6,6,6,3,3,3},{6,6,6,3,3,3},{6,6,6,3,3,3},{3,3,3},{3,3,3},{3,3,3}}
\end{tikzpicture}
\hspace{.7cm}
\begin{tikzpicture}
\planepartition{{6,6,6,4,3,3},{6,6,6,3,3,3},{6,6,5,3,3,2},{4,3,3,1},{3,3,3},{3,3,2}}
\end{tikzpicture}
\hspace{.7cm}
\begin{tikzpicture}
\planepartition{{6,6,6,5,4,3},{6,6,5,3,3,2},{6,5,5,3,3,1},{5,3,3,1,1},{4,3,3,1},{3,2,1}}
\end{tikzpicture}
\hspace{.7cm}
\begin{tikzpicture}
\planepartition{{6,6,6,5,4,3},{6,6,5,4,3,2},{6,5,4,3,2,1},{5,4,3,2,1},{4,3,2,1},{3,2,1}}
\end{tikzpicture}
\hspace{.7cm}
\begin{tikzpicture}
\planepartition{{6,6,6,4,3,3},{6,6,6,4,3,3},{6,6,4,3,2,2},{4,4,3,2},{3,3,2},{3,3,2}}
\end{tikzpicture}
\hspace{.7cm}
\begin{tikzpicture}
\planepartition{{6,6,6,5,5,3},{6,5,5,3,3,1},{6,5,5,3,3,1},{5,3,3,1,1},{5,3,3,1,1},{3,1,1}}
\end{tikzpicture}
\hspace{.7cm}
\begin{tikzpicture}
\planepartition{{6,6,6,5,5,3},{6,5,5,4,3,1},{6,5,4,3,2,1},{5,4,3,2,1},{5,3,2,1,1},{3,1,1}}
\end{tikzpicture}$
}\end{gathered}$
\end{center}
\caption{The seven totally symmetric self-complementary plane partitions inside a $6\times 6\times 6$ box.}
\label{ex:tikztsscpp}
\end{figure}

Totally symmetric self-complementary plane partitions inside a $2n\times 2n\times 2n$ box are in bijection with magog triangles and certain non-intersecting lattice paths. See Figure~\ref{fig:tsscppex} for an example; see also~\cite{PermTSSCPP}. Each length $n$ northwest to southeast diagonal of a magog triangle is a sequence of integers $x_1,x_2,\ldots,x_n$ such that $i\leq x_i\leq n$ and $x_{i+1}\leq x_{i}+1$; these are counted by $C_n$. The number of possible sequences of dots and blank spots across the bottom row of the non-intersecting lattice path representation is also counted by $C_n$, via a bijection to Dyck paths. 

\section{Catalan descending plane partitions}
\label{sec:catdpp}
In this section, we identify a subset of descending plane partitions that can be counted by the Catalan numbers and state our main result, Theorem~\ref{lem:catrelat}. We will prove this theorem in Section~\ref{sec:lemmaproof} using the Catalan generating tree, discussed in Section~\ref{sec:gentree}.

\begin{defn}
A \textbf{Catalan descending plane partition (\rowname)} is a one-row descending plane partition $a_{1,1} \ a_{1,2} \  \cdots \ a_{1,{\lambda_1}}$  such that each entry satisfies $a_{1,j} \leq a_{1,1} - j + 1$. 
\end{defn}

The Catalan descending plane partitions of order 4 are shown in Figure \ref{fig:specialdpprows}.

\begin{figure}[htbp]
$\emptyset$, 2, 3, 3 1, 3 2, 4, 4 1, 4 2, 4 3, 4 1 1, 4 2 1, 4 3 1, 4 2 2, 4 3 2
\caption{\label{fig:specialdpprows}The 14 Catalan descending plane partitions of order 4.}
\end{figure}

Our main theorem justifies the naming of these objects.
\begin{thm}
\label{lem:catrelat}
Catalan descending plane partitions of order $n$ are counted by the $n$th Catalan number \[C_n= \displaystyle\frac{1}{n + 1}\displaystyle\binom{2n}{n}.\]
\end{thm}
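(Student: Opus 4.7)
The plan is to realize the Catalan descending plane partitions of order $n$ as the level-$n$ nodes of a generating tree that obeys the same succession rule as the Catalan generating tree of J.~West reviewed in Section~\ref{sec:gentree}. Recall that this tree has a root of label $2$ and the rule that a node of label $k$ has children with labels $2, 3, \ldots, k+1$, which forces level $n$ to contain exactly $C_n$ nodes.

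First I would specify a children rule on the Catalan DPPs. The empty DPP $\emptyset$ has the two children $\emptyset$ and the one-entry DPP $(2)$. A one-entry Catalan DPP $(a)$ has the $a+1$ children $(a+1), (a+1,1), (a+1,2), \ldots, (a+1,a)$. A Catalan DPP $(a_1, a_2, \ldots, a_k)$ with $k \geq 2$ has the $a_1 - a_2 + 1$ children of the form $(a_1+1, X, a_2, \ldots, a_k)$ with $a_2 \leq X \leq a_1$. A direct check against Definition~\ref{def:dpp} and the Catalan condition $a_{1,j} \leq a_{1,1} - j + 1$ shows that every listed child is itself a Catalan DPP whose top entry is at most $a_1+1$.

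Next I would set $\ell(D)$ equal to the number of children of $D$, giving $\ell(\emptyset) = 2$, $\ell((a)) = a+1$, and $\ell((a_1, \ldots, a_k)) = a_1 - a_2 + 1$ for $k \geq 2$. A three-case inspection then shows that a node of label $k$ has children with labels exactly $\{2, 3, \ldots, k+1\}$: for instance, from $(a_1, \ldots, a_k)$ of label $\ell = a_1 - a_2 + 1$, the child $(a_1+1, X, a_2, \ldots, a_k)$ has label $(a_1+1) - X + 1$, which ranges over $\{2, 3, \ldots, \ell+1\}$ as $X$ runs over $\{a_2, a_2+1, \ldots, a_1\}$. Since $\ell(\emptyset) = 2$ at the root, the tree obeys the Catalan succession rule, and hence level $n$ has $C_n$ nodes.

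Finally I would verify by induction on $n$ that the level-$n$ nodes are exactly the Catalan DPPs of order $n$, each appearing once. This reduces to showing that every non-empty Catalan DPP $D$ of order $n+1$ has a unique parent of order at most $n$ under the children rule, namely $(2) \mapsto \emptyset$, $(a) \mapsto (a-1)$ for $a \geq 3$, and $(a_1, \ldots, a_k) \mapsto (a_1-1, a_3, \ldots, a_k)$ for $k \geq 2$, together with the check that each such parent still satisfies the DPP and Catalan conditions. The main work, rather than a genuine obstacle, is the careful bookkeeping in this three-case verification that the succession rule and the parent map match up consistently.
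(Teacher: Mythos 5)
Your proof is correct, and it reaches the result by a route that the paper deliberately does not take: you build West's generating tree directly on the Catalan DPPs, with the explicit label function $\ell(\emptyset)=2$, $\ell((a))=a+1$, $\ell((a_1,\ldots,a_k))=a_1-a_2+1$, and verify the succession rule $(k)\to(2)(3)\cdots(k+1)$ by direct computation on the entries. The paper instead first passes through the bijection of Lemma~\ref{lem:bijDPPtoDPPpath} to the $\pm1$-word representation, constructs the generating tree on those words, and establishes the succession rule indirectly by comparing adjacent siblings $A$ and $B$ and arguing that $B$ always has one more child than $A$; the children rule you use appears in the paper only in Section~\ref{sec:disc} as an after-the-fact observation, not as the engine of the proof. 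Your version buys a cleaner and more checkable argument: the label of each node is read off from $a_{1,1}$ and $a_{1,2}$, so the rule $(k)\to(2)(3)\cdots(k+1)$ is a two-line computation, and the parent map $(a_1,a_2,\ldots,a_k)\mapsto(a_1-1,a_3,\ldots,a_k)$ makes the uniqueness-and-completeness step transparent; what the paper's path detour buys is the geometric picture of Catalan DPPs as lattice paths, which it wants for its own sake. The only point to tidy is the level indexing at the root: with the root $\emptyset$ carrying label $2$ and obeying the general rule, it must sit at level $1$ (so that level $1$ has the single node $\emptyset$ and level $n$ has $C_n$ nodes); equivalently, adopt the paper's convention of a level-$0$ root with exactly one child. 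As written, "root of label $2$ at level $0$" would shift every count by one. This is a bookkeeping convention, not a gap in the argument.
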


In our proof of Theorem~\ref{lem:catrelat}, we will represent Catalan descending plane partitions as paths. 

\begin{defn}
A \textbf{\pathname} of order $n$ is a sequence of entries in $\{-1,1\}$ with at most $n-1$ ones such that the left-to-right partial sums are nonnegative and the total sum is greater than zero (if the sequence is non-empty).
\end{defn}

A list of the first 14 \pathname{s} is given in Figure \ref{fig:words}. 

\begin{figure}[htbp]
$\emptyset$, 1, 11, 11-1, 1-11, 111, 111-1, 11-11, 1-111, 111-1-1, 11-11-1, 1-111-1, 11-1-11, 1-11-11 
\caption{\label{fig:words}The 14 Catalan DPP paths of order 4, in order corresponding to Figure~\ref{fig:specialdpprows} via the bijection of Lemma~\ref{lem:bijDPPtoDPPpath}.}
\end{figure}

\pathname{s} are in natural bijection with \rowname{s}.

\begin{lemma}
\label{lem:bijDPPtoDPPpath}
\rowname{s} of order $n$ are in bijection with \pathname{s} of order $n$.
\end{lemma}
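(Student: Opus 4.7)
The plan is to exhibit an explicit bijection $\phi$ that converts each \rowname\ into a \pathname\ by turning successive differences of entries into runs of $+1$'s separated by $-1$'s. Set $\phi(\emptyset) = \emptyset$, and for a non-empty \rowname\ with parts $a_{1,1} \ge a_{1,2} \ge \cdots \ge a_{1,\lambda_1}$ define
\[
\phi(a_{1,1}\,a_{1,2}\cdots a_{1,\lambda_1}) = \underbrace{1\cdots 1}_{a_{1,1}-a_{1,2}} \; (-1) \; \underbrace{1\cdots 1}_{a_{1,2}-a_{1,3}} \; (-1) \; \cdots \; (-1) \; \underbrace{1\cdots 1}_{a_{1,\lambda_1}-1},
\]
with the convention that when $\lambda_1 = 1$ there are no $-1$'s and $\phi$ outputs the run of $a_{1,1}-1$ up-steps. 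This matches the pairing implicit in Figures~\ref{fig:specialdpprows} and~\ref{fig:words} (for instance, $4\,3\,2$ maps to $1,-1,1,-1,1$).

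Next I would verify that $\phi$ lands in the set of \pathname{s}. The total count of $+1$'s telescopes to $a_{1,1}-1 \le n-1$, giving the required bound on up-steps. The total sum of the word is $(a_{1,1}-1)-(\lambda_1-1) = a_{1,1}-\lambda_1$, which is strictly positive by DPP condition (1) applied to $k=1$. The crucial check is nonnegativity of partial sums: the sum only decreases at a $-1$, and just after the $k$-th down-step it equals $(a_{1,1}-a_{1,k+1})-k$, which is nonnegative precisely because of the defining \rowname\ inequality $a_{1,k+1} \le a_{1,1} - k$.

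Finally I would spell out the inverse $\psi$: given a \pathname\ $w$, set $a_{1,1}$ to be one more than the number of $+1$'s in $w$, set $\lambda_1$ to be one more than the number of $-1$'s in $w$, and for $2 \le j \le \lambda_1$ put $a_{1,j} = a_{1,1} - c_{j-1}$, where $c_k$ is the number of $+1$'s strictly preceding the $k$-th $-1$ in $w$. Weak decrease of $(a_{1,j})$ follows because $c_k$ is weakly increasing in $k$; positivity $a_{1,j}\ge 1$ reduces to $c_k \le a_{1,1}-1$; and the \rowname\ inequality $a_{1,j} \le a_{1,1}-j+1$ is exactly the partial-sum nonnegativity condition. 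That $\psi \circ \phi$ and $\phi \circ \psi$ are the identity is immediate from the construction.

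There is no substantial obstacle here: once the right encoding is identified, every defining inequality on one side is a direct transcription of one on the other, and the verification is essentially a dictionary check. The only modest care needed is in handling the edge cases $\lambda_1 = 0$ (empty DPP, empty path) and $\lambda_1 = 1$ (no $-1$'s), which the conventions above absorb.
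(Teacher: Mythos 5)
Your map is exactly the bijection the paper uses: the word of successive differences $a_{1,j}-a_{1,j+1}$ separated by $-1$'s is precisely what you get by tracing the upper boundary of the projected box diagram (after discarding the forced initial $-1$ and final $1$), as the paper's example $4\,3\,2 \leftrightarrow 1\,{-1}\,1\,{-1}\,1$ confirms. Your write-up is correct and in fact more explicit than the paper's geometric description, since you verify each defining inequality (step count, total sum, partial sums) and spell out the inverse rather than asserting invertibility.
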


\begin{proof}
The empty \rowname{} $\emptyset$ corresponds to the empty \pathname{} $\emptyset$.
For nonempty \rowname{s}, following the convention in the definitions of the previous section, we are interested in the southwest wall projected onto a grid; see Figure~\ref{fig:rowtopath}. This side view of the \rowname{} nicely corresponds with the numeric representation, in that each entry $a_{1,j}$ of the \rowname{} is represented as a set of $a_{1,j}$ boxes aligned with the bottom-left of the graph in sequential order. 

Consider the path formed by tracing the upper edge of the projected \rowname{} from upper left to lower right. Representing this path as a sequence of $1$'s and $-1$'s, where each $-1$ corresponds to a move to the right and each $1$ corresponds to a move down, we attain a word beginning with a $-1$ and ending with a $1$. Note that there cannot be a move down without first a move to the right to define the top of the first set of boxes, and there must be a final descent to finish the \rowname{} on the last entry. Ignoring these first and last entries, we have a valid \pathname. Each $-1$ in a \pathname{} corresponds to an increase in the $j$ component of each $a_{1,j}$ of the \rowname, so the requirement of partial sums greater than or equal to zero corresponds to the \rowname{} requirement that $a_{1,j}$ be less than or equal to $a_{1,1} - j + 1$. The requirement that the sum of each \pathname{} be greater than zero ensures that the \rowname{} represented by the path is a valid DPP with the length of each row less than the largest part in the row. This process is clearly invertible, and is thus a bijection.
\end{proof} 

See Figure~\ref{fig:rowtopath} for an example of this bijection.

\begin{figure}[htbp]
$432$ \ {$\Leftrightarrow$} \ \raisebox{-.8cm}{\myPP{{4,3,2}}} \ {$\Leftrightarrow$} \ \raisebox{-.6cm}{\mypathbox{0,0}{4}{-1,1,-1,1,-1,1,1}} \ {$\Leftrightarrow$} \ \textcolor{red}{-1}1-11-11\textcolor{red}{1} \ {$\Leftrightarrow$} \ 1-11-11
\caption{\label{fig:rowtopath}An example of the bijection between a \rowname{} and its \pathname{}.}
\end{figure}

\section{231-avoiding permutations and the Catalan generating tree}
\label{sec:gentree}
To prove Theorem \ref{lem:catrelat}, we will use the Catalan generating tree to show that \pathname{s} are in bijection with 231-avoiding permutations, which are known to be counted by $C_n$.

\begin{defn}
A \textbf{$\mathbf{231}$-avoiding permutation} is a permutation 
$\sigma = \sigma_1 \sigma_2 \cdots \sigma_n$ with no triple of indices $\{i,j,k\}$ such that $i<j<k$ and $\sigma_k < \sigma_i < \sigma_j$.
\end{defn}

It is well-known that permutations of $n$ avoiding any permutation $\pi\in\mathfrak{S}_3$ are counted by $C_n$; for example, see \cite{stanley2015catalan}. 
We give below a proof for the case $\pi=231$.

\begin{lemma}
231-avoiding permutations in $\mathfrak{S}_n$ are counted by $C_n$. 
\end{lemma}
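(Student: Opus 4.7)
The plan is to prove the lemma directly via the Catalan recurrence, conditioning on the position of the largest entry. I will let $a_n$ denote the number of 231-avoiding permutations in $\mathfrak{S}_n$ and show
\[
a_n = \sum_{j=1}^{n} a_{j-1}\, a_{n-j}, \qquad a_0 = 1,
\]
which is the defining recurrence for the Catalan numbers $C_n$.

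The central observation is as follows. Fix $\sigma \in \mathfrak{S}_n$ with $\sigma_j = n$. Since $n$ is the global maximum, it cannot play the role of the ``1'' or the ``2'' in a 231 pattern; it can only play the role of the ``3''. Hence $\sigma$ contains a 231 pattern involving $n$ if and only if there exist indices $i < j < k$ with $\sigma_k < \sigma_i$, and avoiding all such patterns is equivalent to requiring that every entry to the left of position $j$ be smaller than every entry to its right. Assuming $\sigma$ also avoids 231 on each side of $j$, this forces the first $j-1$ positions to contain $\{1, \ldots, j-1\}$ in some 231-avoiding order and the last $n-j$ positions to contain $\{j, \ldots, n-1\}$ in some 231-avoiding order. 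Conversely, any such pair of sub-permutations can be concatenated around $n$ to produce a 231-avoiding permutation of $\mathfrak{S}_n$. Summing over $j = 1, \ldots, n$ yields the recurrence above, and induction on $n$ gives $a_n = C_n$.

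The main step --- indeed the only one requiring any thought --- is the left/right splitting around $n$; everything else is bookkeeping, and I do not anticipate any real difficulty there. The lemma is included mainly to keep the paper self-contained before the Catalan generating tree is deployed in Section~\ref{sec:lemmaproof}. A more thematically aligned alternative would be to introduce the generating tree here, labelling each 231-avoiding permutation by the number of positions at which $n+1$ can be inserted without creating a 231 pattern, and verifying that a label-$k$ node has children with labels $2, 3, \ldots, k+1$, which matches the Catalan generating tree. Either route gives the same count, but the recurrence approach is more economical for this preliminary result.
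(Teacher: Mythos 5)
Your proof is correct and takes essentially the same route as the paper: both condition on the position of the maximum entry $n$, observe that the two sides must split into 231-avoiding subpermutations on $\{1,\dots,j-1\}$ and $\{j,\dots,n-1\}$, and derive the Catalan recurrence $a_n=\sum_{j=1}^n a_{j-1}a_{n-j}$. If anything, your write-up is slightly more careful than the paper's, since you explicitly justify why $n$ can only serve as the ``3'' of a pattern and why the concatenation in the converse direction is again 231-avoiding.
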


\begin{proof}
Let $X_n$ denote the number of 231-avoiding permutations of $\mathfrak{S}_n$. It is clear that if $\sigma\in\mathfrak{S}_n$ is a 231-avoiding permutation, then any $\sigma_i \cdots \sigma_j$ for $1\leq{i}\leq{j}\leq{n}$, is also a 231-avoiding permutation (when each $\sigma_k$ is standardized to fall between $1$ and $j-i+1$ while maintaining relative order). Suppose $\sigma_p=n$. We may break $\sigma$ at position $p$ into permutations $\sigma_1 \cdots \sigma_{p-1}$ and $\sigma_{p+1} \cdots \sigma_{n}$ (allowing the empty permutation), which are still 231-avoiding when standardized.
This gives the Catalan recurrence relation $X_{n} = \displaystyle\sum_{p=1}^{n}X_{p-1} X_{n-p}, \ X_0=1$, 
since we obtain two new 231-avoiding permutations of length $p-1$ and $n-p$. 
\end{proof}

In \cite{CatTrees}, J.~West describes a generating tree whose nodes are in bijection with 231-avoiding permutations. The structure of these trees can be described as follows:

\begin{defn}
To construct the \textbf{Catalan generating tree}, begin with the first node at level $n = 0$. If a node is at level $n$, we say its children are at level $n+1$, and its parent is at level $n-1$.  To determine how many children each node should have, if a node is in the $p$th position from the left among its siblings, it will have $p+1$ children. The convention for the node at level $n=0$ is that it has one child. 
\end{defn}

\begin{lemma}[\cite{CatTrees}]
\label{lem:cattree}
The number of nodes at level $n$ in the Catalan generating tree is $C_n$. 
\end{lemma}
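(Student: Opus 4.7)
The plan is to match the Catalan generating tree, level by level, with the generating tree on 231-avoiding permutations whose children are formed by inserting the new maximum; combined with the preceding lemma, this immediately yields $C_n$ nodes at level $n$.

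First I will set up the \emph{insertion tree}: place the empty permutation at level $0$, and let the children of a 231-avoiding $\sigma\in\mathfrak{S}_n$ be the 231-avoiding permutations in $\mathfrak{S}_{n+1}$ obtained from $\sigma$ by inserting $n+1$, ordered left-to-right by insertion position. Each 231-avoiding permutation in $\mathfrak{S}_{n+1}$ has a unique parent (obtained by deleting its maximum), so this is a valid rooted ordered tree exhausting all 231-avoiding permutations.

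Next I will characterize valid insertion positions. Since $n+1$ is the maximum, inserting it can create a 231 pattern only when $n+1$ plays the middle role, which happens precisely when some entry before the insertion point exceeds some entry after. Hence insertion at position $p$ preserves 231-avoidance iff $\{\sigma_1,\ldots,\sigma_{p-1}\}=\{1,\ldots,p-1\}$, i.e., $p-1$ is a \emph{complete prefix length} of $\sigma$. Listing these as $0=q_0<q_1<\cdots<q_m=n$, the permutation $\sigma$ has exactly $m+1$ children.

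The crux, and the main obstacle, is the structural claim that the $(i+1)$-st child $\sigma'$ of $\sigma$ has exactly $i+2$ children. I will verify that the complete prefix lengths of $\sigma'$ are $q_0,q_1,\ldots,q_i$ (inherited from $\sigma$, since $n+1$ sits past position $q_i$) together with the trivial full length $n+1$; any intermediate prefix of $\sigma'$ contains $n+1$ but must omit some value in $\{q_i+1,\ldots,n\}$, so cannot be complete. Thus $\sigma'$ has $(i+1)+1$ children, matching the Catalan generating tree recurrence. Induction on the level identifies the two trees as rooted ordered trees, and the previous lemma gives $C_n$ nodes at level $n$.
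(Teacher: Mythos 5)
Your proposal is correct and follows essentially the same route as the paper: populate the Catalan generating tree with 231-avoiding permutations via insertion of the new maximum, verify that the child-count recurrence matches, and invoke the preceding lemma. The only cosmetic difference is that you characterize valid insertion slots by complete prefixes $\{\sigma_1,\ldots,\sigma_{p-1}\}=\{1,\ldots,p-1\}$ where the paper uses left-to-right maxima; for 231-avoiding permutations these are equivalent, and your version makes the verification that the $(i+1)$-st child has $i+2$ children somewhat more explicit than the paper's.
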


\begin{proof}
The nodes of the Catalan generating tree will be populated with 231-avoiding permutations following West's procedure given in \cite{CatTrees}. First, the root (level $n=0$) will be labeled with $\emptyset$. The nodes at level $n$ for $n\geq 1$ will be 231-avoiding permutations of length $n$. The children of each node will be found by inserting $n$ into the permutation at all valid locations that maintain the 231-avoiding property, starting from left to right. By doing this, we ensure that each node generated is a valid 231-avoiding permutation. We say that $\sigma_i$ is a left-to-right maximum of $\sigma$ if $\sigma_i > \sigma_k$ for all $k<i$. The valid locations for insertion are immediately to the left of a left-to-right maximum or at the end of the permutation. This will generate every 231-avoiding permutation for a given $n$.  Inserting the new $n$ into the permutation affects the descendants predictably. If a node has $k$ left-to-right maxima, it will have $k+1$ children (the additional child comes from adding the $n$ to the end of the permutation) that each have $1,2,3,\cdots,k+1$ left-to-right maxima, respectively, and so $2,3,4,\cdots,k+2$ children, respectively. This pattern follows exactly the structure of Catalan generating tree as defined above, and so the Catalan tree nodes at level $n$ are counted by $C_n$, as desired.
\end{proof}

The start of the Catalan generating tree is shown in Figure \ref{fig:cattree}, and the correspondence with 231-avoiding permutations  can be seen in Figure \ref{fig:permtree}.

\begin{figure}[htbp]
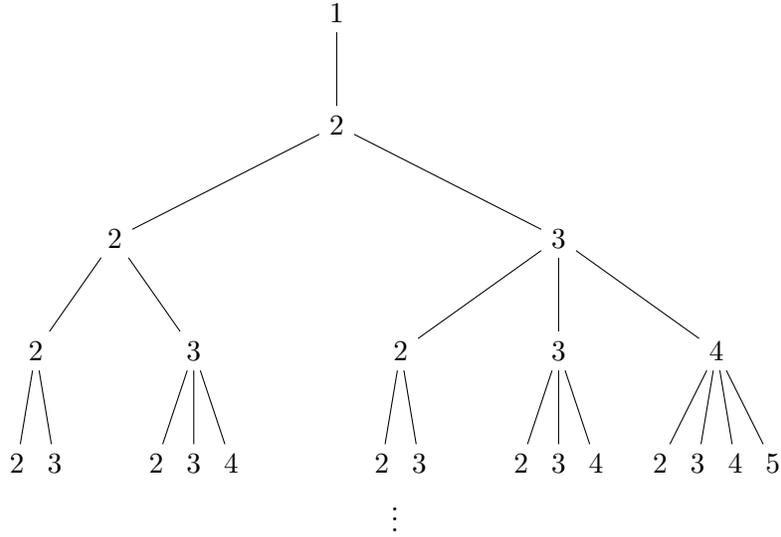

\begin{center}
\cattree
\end{center}

\text{\vdots}

\caption{\label{fig:cattree}The first five levels of the Catalan generating tree described in \cite{CatTrees}. Each node is labeled with the number of children it has.}
\end{figure}

\begin{figure}[htbp]
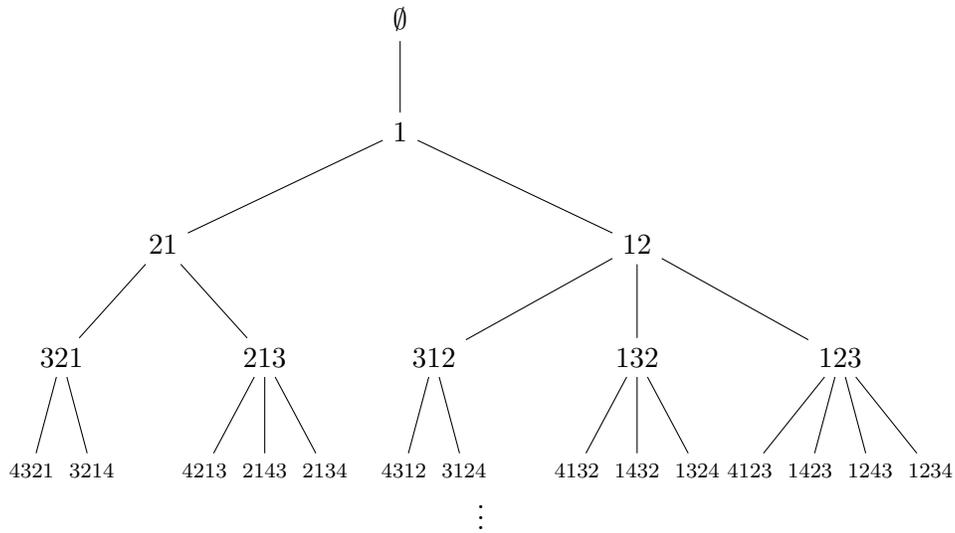

\permtree

\text{\vdots}
\caption{\label{fig:permtree}The first five levels of the Catalan generating tree with nodes labeled by the 231-avoiding permutations.}
\end{figure}

\section{Proof of Theorem \ref{lem:catrelat}}\label{sec:lemmaproof}

We will obtain a bijection between \rowname{s} and 231-avoiding permutations through \pathname{s} by constructing a generating tree with \pathname{s} as the nodes. The $n$th level of this tree will consist of all the \pathname{s} of order $n$; see Figure \ref{fig:pathtree}.

\begin{defn} To construct the \textbf{Catalan DPP path generating tree}, begin with the topmost node, which will be $\emptyset$. To determine the children of a given Catalan DPP path $P$, we have the following three cases. 

\begin{itemize}
\item
\textbf{Case $P=\emptyset$:} We must check to see if $P$ has a parent. If $P$ has a parent, its children will be $\emptyset$ and $1$. If $P$ does not have a parent, it will have one child, $\emptyset$. 
\item
\textbf{Case $P\neq\emptyset$ and $P$ contains a -1:} The rightmost of $P$'s children will be $P$ with an additional pair 1 -1 inserted directly before the first occurrence of -1 in $P$. This produces a valid \pathname{}, since the total sum of the child \pathname{} will remain the same or increase, and adding a sequence of 1-1 will not produce a partial sum less than the previous partial sum. Clearly, inserting a 1 anywhere in the \pathname{} will produce another valid path.  After this first child is generated, the remaining siblings from right to left can be found by shifting the position of the leftmost -1 one position further left, unless doing so would  cause the path to no longer be a valid \pathname{} (in other words, unless the -1 would be shifted to the left of the first 1). 
\item
\textbf{Case $P\neq\emptyset$ and $P$ contains no -1:} The rightmost of $P$'s children will be $P$ with an additional 1 inserted at the end. The next sibling \pathname{} can be found by appending a -1 to the end of the rightmost child \pathname. The remaining siblings can be found by shifting this -1 left as far as possible, as before. 
\end{itemize}
\end{defn}

To prove Theorem \ref{lem:catrelat}, we show that the Catalan DPP path generating tree is in bijection with the Catalan generating tree, discussed in the previous section. 

\begin{proof}[Proof of Theorem \ref{lem:catrelat}]
It is clear that the first three levels of the \pathname{} generating tree are in bijection with the Catalan generating tree, and each leftmost node except the first will always have two children, as the leftmost node is always $\emptyset$. To see how the position of each node relative to its siblings determines how many children it will have, we will consider two sibling nodes, $A$ and $B$, immediately next to each other with $A$ being on the left and $B$ being on the right. To see how $B$ will always have one more child than $A$, there are three cases to consider: 
\begin{itemize}
\item
\textbf{Case $A=\emptyset$ and $B=1$:} It is clear that $A$'s children will be 1 and $\emptyset$. $B$'s children from right to left will be 11, 11-1, and 1-11. Thus $B$ has one more child than $A$, as desired.
\item
\textbf{Case $A,B\neq\emptyset$ and $B$ contains no -1:} It is clear from the way that the tree is constructed that $B$ will have the same number of 1's as $A$. The first of $B$'s children from right to left will be $B$ with an additional 1 appended to it. The second child of $B$ will be the same as the first with an additional -1 appended to it. This second child of $B$ will be the same as the first child of $A$ after removing the last -1 from the child of $A$. Clearly, there will be the same number of remaining siblings generated for each of the cousin nodes, as there are the same positions available for the leftmost -1 to be shifted to. Thus, $B$ has one more child than $A$. 
\item
\textbf{Case both $A$ and $B$ contain a -1:} The leftmost -1 of $A$ will be one position further left than the leftmost -1 of $B$, due to the way $A$ is generated from $B$. Thus, after inserting the 1 -1 to generate the first children of both $A$ and $B$, there will be one additional sibling generated for the first child of $B$ compared to the first child of $A$, so $B$ has one more child than $A$. 
\end{itemize}
In all of these cases, $B$ has one more child than $A$ due to its position relative to $A$, just as each node in the Catalan tree has one more child than its closest sibling to the left. This ensures the structure of the \pathname{}  tree generated is the same as that of the Catalan generating tree. 

To show that this mapping is one-to-one, we will show that each node can only have one unique parent, and so every node must be unique on that level of the \pathname{} generating tree. For any node $C$ in the tree, it is clear that it is distinct from each of its siblings due to the way the tree is constructed. To find the parent of $C$, we must perform the process of constructing the tree in reverse. If $C$ is $\emptyset$, clearly its parent is $\emptyset$. Otherwise if $C$ contains no -1's, its parent is $C$ with one of its 1's removed. There are clearly no duplicate parents possible in this case. If $C$ contains a -1, its parent can be found by removing the leftmost 1 and leftmost -1 from $C$. It is clear that any other node than this could not generate $C$ as a child, as each 1 and -1 removed from $C$ were to the left of any other -1's in $C$. Also, the parent of $C$ found with this method must generate $C$. Thus, each node in each level could only have one unique parent and could not have appeared anywhere else in that level. 

To show that every \pathname{} is contained in the \pathname{} generating tree, suppose there exists some Catalan DPP path, $D$, of order $n$, that is not present in the $n$th level of the \pathname{} tree. It is clear that $D$ cannot be $\emptyset$ since $\emptyset$ is present at each level of the \pathname{} tree. Therefore, $D$ contains at least one 1 and less than or equal to $n$ 1's and could contain any legal number of -1's. It is now clear that the method used to find the hypothetical parent of $D$ as above can be validly applied to $D$. Applying this method will always produce a new node that can have this method applied to it, or $\emptyset$ as the parent of $D$. If $D$ has $d$ 1's, it has at most $(d-1)$ -1's. Also, applying the parent-finding method recursively always yields a new node with one less 1 and one less -1 or zero -1's. Applying this parent-finding method $d$ times, we will have found $\emptyset$ as an ancestor of $D$. But if $\emptyset$ is an ancestor of $D$, this would clearly have generated a path to $D$, therefore $D$ is in the \pathname{} generating tree. 

Thus, every \pathname{} is present in its appropriate level of the \pathname{} generating tree, and there are no duplicates at any specific level. This mapping is one-to-one and onto, so therefore is a bijection, so by Lemma~\ref{lem:cattree}, \pathname{s} of order $n$ are counted by $C_{n}$.
Applying Lemma~\ref{lem:bijDPPtoDPPpath} that \pathname{s} are in bijection with \rowname{s}, we see that \rowname{s} of order $n$ are also counted by $C_n$. 
\end{proof}

\begin{figure}[htbp]
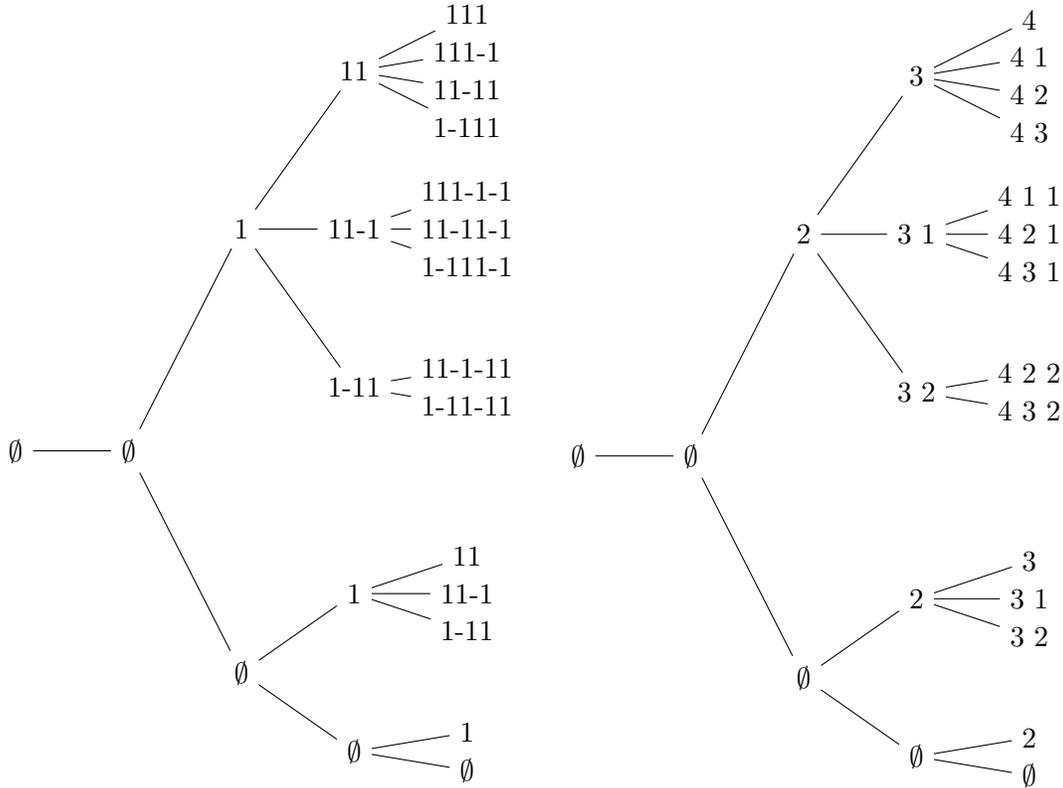

\begin{center}
$\vcenter{\hbox{\pathtree}} \ \vcenter{\hbox{\rowtree}}$
\end{center}
\caption{\label{fig:pathtree}Left: The first five levels of the \pathname{} generating tree, rotated for easier spacing; Right: The same tree with each node replaced by the corresponding \rowname. The same structure can be seen as in Figure \ref{fig:cattree}.}
\end{figure}

\section{Discussion}
\label{sec:disc}
Though our proof of Theorem~\ref{lem:catrelat} relied on expressing Catalan DPPs in path form, we can now see the Catalan generating tree structure directly on Catalan DPPs. Given a Catalan DPP $a_{1,1} \ a_{1,2} \ \cdots \ a_{1,\lambda_1}$, construct its children by incrementing $a_{1,1}$ and inserting a new number between the first two numbers in all possible ways. For example, the Catalan DPP 6 4 2 2 has children, from right to left, 7 4 4 2 2, 7 5 4 2 2, and 7 6 4 2 2. A general Catalan DPP $a_{1,1} \ a_{1,2} \ \cdots \ a_{1,\lambda_1}$ will have exactly $a_{1,1}+1-a_{1,2}$ children, which will all be of the form $(a_{1,1}+1) \ x \ a_{1,2} \ \cdots \ a_{1,\lambda_1}$ for all integers $x$ such that $a_{1,1}+1\geq x\geq a_{1,2}$. See Figure~\ref{fig:pathtree}.

We hope that comparing the Catalan descending plane partitions presented in this paper with the known Catalan subsets of alternating sign matrices and totally symmetric self-complementary plane partitions (briefly discussed in Section \ref{sec:def}) will be helpful in finding the missing explicit bijections among these three sets of objects enumerated by (\ref{eq:prod}). One challenge is that, in contrast to the Catalan objects on the other two sets, it is not clear how to associate a Catalan DPP to each non-Catalan descending plane partition in a natural way. One would hope to map a Catalan DPP to a each DPP such that the distribution over all descending plane partitions of order $n$ would match one of the Catalan distributions on alternating sign matrices or totally symmetric self-complementary plane partitions. This has not been achieved, but there is potential that it yet may. 

\section*{Acknowledgments}
The authors thank Kevin Dilks for help in coding both \url{SageMath} and \LaTeX~to produce some of the figures of this paper. We also thank Mi Huynh for directing our attention toward the Catalan generating tree in \cite{stanley2015catalan}. Striker was partially supported by  National Security Agency Grant H98230-15-1-0041.

\bibliographystyle{plain}
\bibliography{examplebib}
\end{document}